\documentclass[11pt,reqno,twoside]{amsart}
\usepackage{graphicx}
\UseRawInputEncoding 
\usepackage{subfigure}
\usepackage{amsmath,mathdots,amssymb,latexsym,amsthm,mathrsfs,epsfig}
\usepackage{multirow}
\usepackage{breqn}
\usepackage{float}
\usepackage{cite}
\usepackage[colorlinks=true, linkcolor=blue, citecolor=blue]{hyperref}
\usepackage{enumitem}
\usepackage{environ}
\usepackage{mathdots}
\usepackage{mathtools}
\NewEnviron{myequation}{
	\begin{equation}
		\scalebox{1.1}{$\BODY$}
	\end{equation}
}
\usepackage{color}
\usepackage{soul}
\renewcommand{\baselinestretch}{1.2}
\makeatletter
\newcommand{\single}{\let\CS=\@currsize\renewcommand{\baselinestretch}{1.1}\tiny\CS}
\newcommand{\singb}{\let\CS=\@currsize\renewcommand{\baselinestretch}{1}\tiny\CS}
\newcommand{\singa}{\let\CS=\@currsize\renewcommand{\baselinestretch}{1.2}\tiny\CS}
\newcommand{\oneandahalfspacing}{\let\CS=\@currsize\renewcommand{\baselinestretch}{1.5}\tiny\CS}
\newcommand{\singlespacing}{\let\CS=\@currsize\renewcommand{\baselinestretch}{1.6}\large\CS}
\newcommand{\bc}{\begin{center}}
	\newcommand{\ec}{\end{center}}
\newcommand{\be}{\begin{eqnarray}}
	\newcommand{\ee}{\end{eqnarray}}

\makeatletter
\newcommand{\beano}{\begin{eqnarray*}}
	\newcommand{\eeano}{\end{eqnarray*}}

\newcommand{\ba}{\begin{array}}
	\newcommand{\ea}{\end{array}}

\makeatletter

\newcommand{\eval}[2][\right]{_lax
	\ifx#1\right\relax \left.\fi#2#1\rvert}
\newcommand{\diag}{\operatorname{diag}}

\newcommand{\Ind}{\operatorname{Ind}}

\newtheorem{remark}{Remark}[section]

\newtheorem{definition}{Definition}[section]

\newtheorem{theorem}{Theorem}[section]

\newtheorem{lemma}{Lemma}[section]
\numberwithin{equation}{section}
\allowdisplaybreaks
\begin{document}
	
	\title[Symplectic model for Zelevinsky modules of $\mathrm{GL}(n,\mathrm{D})$]{Symplectic model for Zelevinsky modules of $\mathrm{GL}(n,\mathrm{D})$}
	

\author[Hariom Sharma]{Hariom Sharma}
	\address{Department of Mathematics, Indian Institute of Technology Bombay, Mumbai, 400076, Maharashtra, India}
	\email{hariomshrma97@gmail.com, hariom@math.iitb.ac.in}

	
%

    \subjclass[2020]{Primary 22E50, 22E35; Secondary 11F70.}
	\keywords{Distinguished representations, p-adic groups, Quaternion division algebras, Symplectic model, Zelevinsky modules.}

	\begin{abstract} 
		Let $\mathrm{D}$ be a quaternion division algebra over a non-archimedean local field $\mathrm{F}$ of characteristic zero. This article demonstrates the existence and uniqueness of the symplectic model for a family of Zelevinsky modules of $\mathrm{GL}(n,\mathrm{D})$ to a family of irreducible representations of $\mathrm{GL}(n,\mathrm{D})$. For this family of irreducible representations, we identify a necessary condition under which a symplectic model can exist. This  work extends a result of Offen and Sayag beyond the case $\mathrm{D} = \mathrm{F}$.
	\end{abstract}
	
	\maketitle
	

	\setcounter{page}{1}
\section{Introduction}\label{sec1}
Let $\mathrm{F}$ be a non-Archimedean local field of characteristic zero, and let $\mathrm{D}$ be a quaternion division algebra with center $\mathrm{F}$.  
For any integer $n \geq 1$, set $G_n = \mathrm{GL}(n,\mathrm{D})$.
We then set  
\[
H_n = \{\, g \in G_n \mid {}^{t}\!\bar{g} J_n g = J_n \,\},
\]
where the map $x \mapsto \bar{x}$ denotes the main involution on $\mathrm{D}$, applied entrywise to elements of $G_n$, and $J_n$ is the anti-diagonal matrix in $G_n$ given by
$$
J_n =
\begin{pmatrix}
	& & 1 \\
	&  \iddots & \\
	1 & &
\end{pmatrix}.
$$
Note that $G_n$ and $H_n$ are inner forms of the general linear group $\mathrm{GL}(2n, \mathrm{F})$ and the symplectic group $\mathrm{Sp}(2n, \mathrm{F})$, respectively. Throughout, a representation of $G_n$ will always mean a smooth admissible representation. Denote by $\mathrm{Irr}(G_n)$ the set of equivalence classes of irreducible representations of $G_n$. A representation $(\pi,V)$ of $G_n$ is said to have a symplectic model (or to be $H_n$-distinguished) if $\mathrm{Hom}_{H_n}(\pi, \mathbb{C}) \not= 0$. 
Verma \cite[Theorem $1.1$]{Verma} proved the uniqueness of the symplectic model for irreducible representations, i.e., 
\begin{lemma}\label{10}
	For $\pi \in \mathrm{Irr}(G_n)$, we have 
	$\mathrm{dim} \mathrm{Hom}_{H_n}(\pi, \mathbb{C}) \leq 1$.
\end{lemma}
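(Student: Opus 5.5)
The natural route is the Gelfand--Kazhdan method, as Heumann--Rallis did for $\mathrm{Sp}(2n,\mathrm{F})\subset \mathrm{GL}(2n,\mathrm{F})$. We show that $(G_n,H_n)$ is a Gelfand pair in the weak sense. Concretely, we look for an anti-involution $\tau$ of $G_n$ with two properties:
\begin{enumerate}
\item[(a)] $\tau(H_n)=H_n$;
\item[(b)] every $H_n$-bi-invariant distribution $T$ on $G_n$ satisfies $T^{\tau}=T$.
\end{enumerate}
By the Gelfand--Kazhdan criterion, (a) and (b) give
\[
\dim \mathrm{Hom}_{H_n}(\pi,\mathbb{C})\cdot \dim \mathrm{Hom}_{H_n}(\tilde{\pi},\mathbb{C})\le 1
\]
for $\pi\in\mathrm{Irr}(G_n)$. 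To pass from this product bound to the bound for $\pi$ alone, we use that $\tilde\pi\cong \pi^{\theta}$, where $\theta(g)=J_n\,{}^t\bar g^{-1}J_n^{-1}$. This is the quaternionic analogue of the Gelfand--Kazhdan theorem and is available via characters. Since $\theta$ preserves $H_n$, the two Hom spaces have equal dimension, and the lemma follows.

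For $\tau$ we take a twisted transpose-inverse-conjugate composed with inversion. A candidate is $\tau(g)=\epsilon\, J_n\,{}^t\bar g\, J_n^{-1}\epsilon^{-1}$ for a suitable $\epsilon$ normalising $H_n$. The map $g\mapsto g^{-1}$ itself preserves $H_n$, so one could even try $\tau(g)=g^{-1}$ composed with $\theta$. We then reduce (b) to an orbit problem through the symmetrization map $s(g)=g\,\theta(g)^{-1}$. This map identifies $H_n\backslash G_n$ with the symmetric space $X=\{x: \theta(x)=x^{-1}\}$. Under $s$, $H_n$-bi-invariant distributions become $H_n$-conjugation invariant distributions on $X$. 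Concretely, $X$ is the set of $x=gJ_n{}^t\bar g\, J_n$, equivalently the quaternionic skew-hermitian-type forms $A=g\,\cdot$ equivalent to $J_n$, and $\tau$ corresponds to a transpose-type involution on $X$.

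To prove the invariance of distributions on $X$, we use Bernstein localization and Harish-Chandra descent (Aizenbud--Gourevitch). The steps are as follows.
\begin{enumerate}
\item[(i)] Classify semisimple $H_n$-orbits in $X$ and show that each is $\tau$-stable. This uses the Jordan decomposition, together with the fact that a quaternionic matrix is conjugate to its $J$-adjoint.
\item[(ii)] Descend to centralizers. These are products of pairs of the same type over extensions of $\mathrm{F}$, namely inner forms of $(\mathrm{GL}_{2m},\mathrm{Sp}_{2m})$ over field extensions, or $(H\times H, \Delta H)$-type pieces.
\item[(iii)] Handle the nilpotent cone by the standard Fourier transform and homogeneity argument, as in Aizenbud--Gourevitch's treatment of the split case. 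There one checks that the nilpotent orbits are $\tau$-stable and that the relevant eigenvalue inequality on the normal spaces holds.
\end{enumerate}

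The main obstacle will be the nilpotent step for the quaternionic form. There, orbits of $H_n$ on nilpotent elements of the tangent space $\mathfrak{p}$ are parametrised by quaternionic hermitian/skew-hermitian forms rather than by partitions alone. We must verify both $\tau$-stability and the homogeneity condition; the latter requires that no distribution on the nilpotent cone can have the wrong homogeneity degree. We would first try to transfer this from $\mathrm{GL}(2n,\mathrm{F})$ via the $\mathfrak{sl}_2$-triple criterion, checking the required inequality on the eigenvalues of $\mathrm{ad}(h)$ directly. Since these eigenvalues are computed after base change to the splitting field, where the pair becomes the split one, the inequality should transfer.
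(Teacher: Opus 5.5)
The paper gives no proof of this statement: Lemma~\ref{10} is quoted directly from Verma~\cite[Theorem 1.1]{Verma}, so your outline has to stand on its own.

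\textbf{The soft steps are fine.} Your architecture is the standard one: the Gelfand--Kazhdan criterion with $\sigma(g)=\theta(g^{-1})=J_n{}^t\bar g J_n^{-1}$, the resulting product bound, and then $\tilde\pi\cong\pi\circ\theta$. Your second candidate for $\tau$ is just the first one with $\epsilon=1$. The isomorphism $\tilde\pi\cong\pi\circ\theta$ does follow from characters, because $g\mapsto{}^t\bar g$ preserves reduced characteristic polynomials. A few descriptions need correcting:
\begin{itemize}
\item $s$ identifies $G_n/H_n$, not $H_n\backslash G_n$, with $X=\{x:\ xJ_n \text{ hermitian}\}$; these are hermitian forms, not skew-hermitian ones.
\item $\sigma$ fixes $X$ pointwise. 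So the real content of your step (i) is that $G_n$-conjugate semisimple elements of $X$ are $H_n$-conjugate.
\item No $(H\times H,\Delta H)$-type descendants occur. For a self-adjoint semisimple $x$ every primary component is nondegenerate, so every descendant is again a hermitian pair over $\mathrm{D}\otimes_{\mathrm F}E$, either quaternionic or split.
\end{itemize}

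\textbf{The genuine gap is step (iii), and the route you commit to there fails.} The homogeneity argument can exclude a nilpotent orbit $H\cdot x$ only if its invariant measure has degree different from $\tfrac12\dim\mathfrak p$. That measure has degree $\tfrac12\mathrm{Tr}(\mathrm{ad}(d)|_{\mathfrak h_x})$. This condition already fails for the split pair, so there is no inequality to transfer. For $n=2$, take $x=\bigl(\begin{smallmatrix}0&1\\0&0\end{smallmatrix}\bigr)\in\mathfrak p$. Over a splitting field it becomes the Jordan type $(2,2)$ element $\mathrm{diag}(e,{}^te)$ of $(\mathfrak{gl}_4,\mathfrak{sp}_4)$ in a Siegel Levi, with $d=\mathrm{diag}(h,-{}^th)$. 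A direct computation gives $\dim\mathfrak h_x=6$ with $\mathrm{ad}(d)$-weights $0,0,0,2,2,2$. Hence $\mathrm{Tr}(\mathrm{ad}(d)|_{\mathfrak h_x})=6=\dim\mathfrak p$, and homogeneity cannot rule this orbit out.

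\textbf{How to close step (iii) in the non-archimedean case.} Only the stability half of your plan is needed, and no Fourier transform. The $H$-invariant distributions on a single nilpotent orbit form a space of dimension at most one, spanned by a positive invariant measure. An admissible involution $\mathrm{Ad}(g)$ that stabilizes the orbit rescales this measure by some $c>0$ with $c^2=1$, so it fixes it. By induction over the finitely many nilpotent orbits, an $\mathrm{Ad}(g)$-anti-invariant $H$-invariant distribution supported on the nilpotent cone must vanish.

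Stability of the orbits holds because nilpotent $H$-orbits in $\mathfrak p$, and in every descendant, are determined by Jordan type. The multiplicity spaces of a self-adjoint nilpotent carry hermitian forms of the same type. Over a $p$-adic field, such forms over $\mathrm{D}\otimes_{\mathrm F}E$ for the canonical involution are classified by rank. The same fact also gives step (i).

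Until step (iii) is replaced by an argument of this kind, your proposal is a plan rather than a proof.
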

This article aims to prove the existence and uniqueness of the symplectic model for Zelevinsky modules of $G_n$ (see Section \ref{199}). In \cite[Section $4$]{Verma}, Verma also classified the irreducible representations of \(G_1\) and \(G_2\) admitting a symplectic model. 
Sharma and Verma \cite[Theorems $1.2$ and $1.3$]{sharma2023symplectic} later extended this classification to \(G_3\) and \(G_4\), and further to a family of ladder and unitary representations \cite[Theorems $1.1$ and $1.3$]{SV2024} for general \(n\). 
The classification of discrete series  representations of \(G_n\) admitting a symplectic model was recently obtained by Matringe et.al. \cite[Theorems $1.2$ and $1.3$]{MSSS26}. 
In the global setting, the symplectic period for the pair $(G_2,H_2)$ was studied by Verma   \cite{verma2018distinguished}. We are currently investigating the global symplectic period for the pair $(G_{2n}, H_{2n})$.


\section{Main result}\label{199}
We begin by introducing some notation and recalling the Zelevinsky classification \cite{minguez2013representations}, which will be used in the formulation of our results. 
Let $
\mathrm{Irr} = \bigsqcup_{n=0}^{\infty} \mathrm{Irr}(G_n),$
where $\mathrm{Irr}(G_0)$ consists of the trivial representation of the trivial group.   Let $\mathrm{Cusp}$ be a subset of supercuspidal representations in $\mathrm{Irr}$.
Let $\nu$ denote the character of $\mathrm{G}_n$ obtained by composing the normalized absolute value on $\mathrm{F}$ with the reduced norm, for any $n \in \mathbb{N}$.
A Zelevinsky segment is a set of the form $[a,b]_{(\rho)} = \{ \nu^{l_{\rho} i} \rho : i = a, a+1, \dots, b \},$
where $\rho \in \mathrm{Irr}(G_d)$ is supercuspidal for some $d \in \mathbb{N}$, $l_{\rho}$ is a positive integer associated with $\rho$, and $a,b \in \mathbb{Z}$ satisfy $a-1 \le b$ (with $b = a-1$ corresponding to the empty segment).  
Note that if $\rho$ is an irreducible supercuspidal representation of $G_1$, then $l_{\rho}=2$ when $\dim(\rho)=1$ and $l_{\rho}=1$ otherwise; on the other hand, if $\rho$ is an irreducible supercuspidal representation of $G_2$, then $l_{\rho}=1$.
A detailed description of $l_{\rho}$ can be found in \cite[Section $2$]{Tad90}. 

A multi-set of Zelevinsky segments is a finitely supported function
\[
\mathfrak{m} : \{\text{Zelevinsky segments}\} \longrightarrow \mathbb{Z}_{\ge 0}.
\]
Equivalently, we may represent $\mathfrak{m}$ as an unordered tuple $\mathfrak{m} = \{\Delta_1, \dots, \Delta_t\},$ where $\mathfrak{m}(\Delta) = \#\{\, i : \Delta_i = \Delta \,\}$
denotes the multiplicity of each segment $\Delta$. When a particular order is fixed, by abuse of notation, we write $\mathfrak{m} = (\Delta_1, \dots, \Delta_t)$. There exists a bijection
$\mathfrak{m} \longmapsto Z(\mathfrak{m})$
between the set of multi-sets of Zelevinsky segments and the set $\mathrm{Irr}$ of irreducible representations of all general linear groups over $\mathrm{D}$, whose description is made explicit below.

For a segment $\Delta = [a,b]_{(\rho)}$, let $Z(\Delta)$ denote the unique irreducible subrepresentation of
$\nu^a \rho \times \cdots \times \nu^b \rho,$ which is also the unique irreducible quotient of $\nu^b \rho \times \cdots \times \nu^a \rho.$
Here, $\times$ denotes the Bernstein--Zelevinsky product (normalized parabolic induction), and $Z(\emptyset)$ is the trivial representation of the trivial group.

Two Zelevinsky segments $\Delta$ and $\Delta'$ are called linked if neither is contained in the other and their union is again a segment. If this occurs, we may express
\[
\Delta = [a,b]_{(\rho)}, \quad \Delta' = [a',b']_{(\rho)}
\]
for some supercuspidal representation $\rho \in \mathrm{Irr}$ and integers $a,b,a',b' \in \mathbb{Z}$.  
In this case, we say that $\Delta$ precedes $\Delta'$ whenever $a < a'$, $b < b'$, and $b \geq a'-1$. 

For the purposes of this article, a multi-set $\mathfrak{m} = (\Delta_1, \dots, \Delta_t)$ of Zelevinsky segments is called well-ordered if $\Delta_j$ does not precede $\Delta_i$ whenever $1 \leq i < j \leq t$, and the associated representation
\[
\pi(\mathfrak{m}) = Z(\Delta_1) \times \cdots \times Z(\Delta_t)
\]
is then independent of the chosen ordering and has a unique irreducible quotient $Z(\mathfrak{m})$, establishing the bijection in the Zelevinsky classification. Throughout this article, we refer to $\pi(\mathfrak{m})$ as a \textbf{Zelevinsky module}.

We now introduce the notion of cuspidal support for a representation $\pi$ of $G_n$.  
For $\pi \in \mathrm{Irr}$, there exist $\rho_1, \ldots, \rho_k \in \mathrm{Cusp}$, unique up to permutation, such that $\pi$ embeds as a subrepresentation of $\rho_1 \times \cdots \times \rho_k$.
The cuspidal support of $\pi$ is defined as
\[
\mathrm{Supp}(\pi) := \{\rho_1, \ldots, \rho_k\}.
\]
If $\sigma_1, \ldots, \sigma_k \in \mathrm{Irr}$, then we set
$\mathrm{Supp}(\sigma_1 \otimes \cdots \otimes \sigma_k) := \bigcup_{i=1}^k \mathrm{Supp}(\sigma_i)$.
Let $M$ be a standard Levi subgroup of $G$ and let  $\pi$ be a representation of $M$ such that  $\pi_1, \ldots, \pi_t$ are the irreducible components of $\pi$. Then we set
\[
\mathrm{Supp}(\pi) := \bigcup_{i=1}^t \mathrm{Supp}(\pi_i).
\]

Moving forward, we introduce the weighted odd part function $\mathfrak{O}$ on multi-sets of Zelevinsky segments. Let $\rho \in \mathrm{Irr}(G_r)$ for some $r \in \mathbb{N}$, and suppose $\rho \in \Delta$; then set $d(\Delta) = r$. For a Zelevinsky segment $\Delta$, let $\# (\Delta)$ be the cardinality of $\Delta$, and set
\[
\mathfrak{O}(\Delta) =
\begin{cases}
	d(\Delta), & \text{if } \#(\Delta) \text{ is odd},\\[1mm]
	0, & \text{if } \#(\Delta) \text{ is even}.
\end{cases}
\]
For a multi-set $\mathfrak{m} = \{\Delta_1, \dots, \Delta_t\}$, we set
\[
\mathfrak{O}(\mathfrak{m}) = \sum_{i=1}^t \mathfrak{O}(\Delta_i).
\]

The main result of this article is the following.
\begin{theorem}\label{1}
	Let	$\mathfrak{m} = \{\Delta_1, \ldots, \Delta_k\} \quad \text{with} \quad 
	\Delta_i = [a_i, b_i]_{(\rho_i)},
	$
	where each \(\rho_i \in \mathrm{Irr}(G_d)\) is a supercuspidal representation with no symplectic model.
	Let $\pi(\mathfrak{m})$ be a representation of $G_n$. Then $\pi(\mathfrak{m})$ has a symplectic model if and only if $\mathfrak{O}(\mathfrak{m}) = 0$, and  in this case,
	$$\mathrm{dim} \mathrm{Hom}_{H_n}(\pi(\mathfrak{m}), \mathbb{C})= 1.$$
	Furthermore, if $\pi = Z(\mathfrak{m}) \in \mathrm{Irr}$ admits a symplectic model, then $\mathfrak{O}(\mathfrak{m}) = 0$.
\end{theorem}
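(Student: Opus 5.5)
The plan follows the method of Offen and Sayag for $\mathrm{D}=\mathrm{F}$: a geometric lemma (Bernstein--Zelevinsky/Mackey theory) analysis of $\pi(\mathfrak{m})|_{H_n}$ via the orbit decomposition of $P\backslash G_n/H_n$, where $P$ is the standard parabolic with Levi $M=G_{n_1}\times\cdots\times G_{n_k}$ and $n_i=d\,\#(\Delta_i)$. The double cosets are parametrized by involutive data: an involution $\tau$ on the blocks, with fixed blocks $i$ contributing $H_{n_i}$-type stabilizers (or a further splitting) and swapped pairs $i\leftrightarrow\tau(i)$ contributing a twisted diagonal $\{(g,g^{*})\}$. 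For each orbit $x$ the contribution is $\mathrm{Hom}_{M_x}(r_{L_x,M}(\sigma),\delta_x)$, a Hom space over a Levi of the shape $\prod G\times G$ (diagonal) $\times \prod H$. The filtration of $\pi(\mathfrak{m})|_{H_n}$ by orbits gives $\dim\mathrm{Hom}_{H_n}(\pi(\mathfrak{m}),\mathbb{C})\le\sum_x\dim(\text{contribution of }x)$.

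For the upper bound and the necessity, I would take each orbit and use the Jacquet modules of $Z(\Delta_i)$ (segments $Z([a,c])\otimes Z([c+1,b])$). A diagonal $G\times G$ piece requires the two factors to be contragredient up to the twist (i.e.\ $\nu$-exponents symmetric), while an $H$-piece requires an $H$-distinguished segment representation of smaller size. Since $\rho_i$ is not $H$-distinguished, the base case is that $Z(\Delta)$ with $\#(\Delta)=1$ has no symplectic model. More generally, $Z([a,b]_{(\rho)})$ is $H$-distinguished only via a pairing of its cuspidal support into consecutive pairs $\nu^{l i}\rho,\nu^{l(i+1)}\rho$, which requires $\#(\Delta)$ to be even; I would prove this by induction, again with the geometric lemma. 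Comparing central/cuspidal exponents, in the manner of Offen--Sayag's ``exponent'' argument with the real parts of $\nu$ forcing equalities, only the orbit in which every block is split into consecutive pairs of cuspidals glued by the twisted diagonal survives. In particular $\mathfrak{O}(\mathfrak{m})\neq0$ gives no surviving orbit, and when $\mathfrak{O}(\mathfrak{m})=0$ exactly one orbit contributes, with a one-dimensional contribution. This gives $\dim\le1$.

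For existence when $\mathfrak{O}(\mathfrak{m})=0$, each $\#(\Delta_i)$ is even. I would construct a nonzero $H_{n_i}$-invariant functional on $Z(\Delta_i)$, realizing it as a quotient/sub of $Z([a,a+1])\times\cdots\times Z([b-1,b])$, where $Z$ of a pair is distinguished by the $G_d\times G_d$ diagonal period, i.e.\ the open orbit. This functional is defined by a convergent or meromorphically continued integral over the open orbit, as in Offen's method. Heredity, via a closed or open orbit functional for $P$-induced representations $\sigma_1\times\sigma_2$ with $\sigma_i$ distinguished, then gives that $\pi(\mathfrak{m})$ is distinguished. I expect the main obstacle to be this heredity step together with the analytic continuation: showing that the open-orbit integral, after twisting by $\nu^{s}$, is holomorphic and nonzero at $s=0$ for the well-ordered product. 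I would first try the Blanc--Delorme style meromorphic continuation, and I would use the well-ordering to exclude poles.

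Finally, the irreducible statement follows from the exactness argument. $Z(\mathfrak{m})$ is a quotient of $\pi(\mathfrak{m})$, so $\mathrm{Hom}_{H_n}(Z(\mathfrak{m}),\mathbb{C})\hookrightarrow\mathrm{Hom}_{H_n}(\pi(\mathfrak{m}),\mathbb{C})$. If $Z(\mathfrak{m})$ is distinguished, then $\pi(\mathfrak{m})$ is, so $\mathfrak{O}(\mathfrak{m})=0$ by the first part.
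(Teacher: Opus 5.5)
Your outline has the paper's architecture: an orbit filtration for necessity and the bound, heredity for existence, and the quotient map for $Z(\mathfrak{m})$. However, the upper-bound step fails as written, for four reasons.

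\begin{enumerate}
\item \emph{Wrong gluing condition.} For $(G_n,H_n)$ a twisted-diagonal factor imposes $\Delta_{\mathfrak{i}}=\nu\Delta_{\tau(\mathfrak{i})}$ for $\mathfrak{i}\prec\tau(\mathfrak{i})$. The pairing is between $\sigma_{\mathfrak{i}}$ and $\sigma_{\tau(\mathfrak{i})}\circ{*}$, which is isomorphic to the contragredient, so it forces an isomorphism up to $\nu$. It does not impose contragredience with symmetric exponents.

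\item \emph{The orbit you say survives does not exist.} For your parabolic with Levi $\prod_i G_{n_i}$, $\tau$ pairs a piece of one block only with a piece of a different block (compare $i_1\neq i_2$ in the proof of Lemma \ref{299}). So there is no orbit in which each block is cut into consecutive cuspidal pairs glued to each other. The orbit that must survive is the open one, $M_x=\prod_i H_{n_i}$, with contribution $\bigotimes_i\mathrm{Hom}_{H_{n_i}}(Z(\Delta_i),\mathbb{C})$.

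\item \emph{The ordering is never used.} Excluding every other orbit is exactly where the Zelevinsky ordering enters, and your exponent comparison never invokes it. Without it the claim is false: $\rho\times\nu^{-1}\rho$ is distinguished through the closed orbit although both of its segments have odd length. The paper orders the segments with $a_1\le\cdots\le a_k$ and notes that the minimal index moved by $\tau$ is an initial piece $(i,1)$. It then checks that $\Delta_{\mathfrak{i}}=\nu\Delta_{\tau(\mathfrak{i})}$ is incompatible with this ordering (Lemma \ref{16}, and Lemma \ref{299} across distinct cuspidal lines).

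\item \emph{Multiplicity one is missing.} Even with only the open orbit left, $\dim\le 1$ needs multiplicity one for each irreducible $Z(\Delta_i)$. The paper takes this from Verma's theorem (Lemma \ref{10}) and feeds it into Lemma \ref{15}. Your sketch yields only the parity condition, so the asserted one-dimensional contribution is unsupported.
\end{enumerate}

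On existence, you have placed the difficulty in the wrong step. Heredity needs neither holomorphy at $s=0$ nor the well-ordering. The leading Laurent coefficient at $s=0$ (along a generic line) of the meromorphic family of open-orbit functionals is already a nonzero $H_n$-invariant functional, and Lemma \ref{12} indeed holds for finite-length factors in any order.

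The real content is the segment case, Lemma \ref{11}, which the paper imports from the Sharma--Verma ladder classification. Your sketch does not settle it, for two reasons. First, the twisted-diagonal period behind $Z([c,c+1])$ lives on the closed orbit of $\nu^{c+1}\rho\times\nu^{c}\rho$, not the open one. Second, the sub/quotient realization does not produce a functional on $Z(\Delta)$ directly:
\begin{itemize}
\item $Z(\Delta)$ is a subrepresentation of $Z([a,a+1])\times\cdots\times Z([b-1,b])$, and an invariant functional there may restrict to zero on $Z(\Delta)$;
\item $Z(\Delta)$ is a quotient of the product in the reverse order, and there you would have to show the functional kills the kernel.
\end{itemize}
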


When $\mathrm{D} = \mathrm{F}$, the above result is a theorem due to Offen and Sayag \cite[Theorem $2.1$]{OS19}. 
The methods and proof ideas in this article are inspired by \cite{OS19} and \cite{mitra2017klyachko}.
We follow their approach in developing the lemmas and the main result.

\begin{remark}
	Let $\rho$ be an irreducible supercuspidal representation of $G_2$, and consider the two segments of length one on the same $\rho$-line:
	\[
	\Delta_{1} = [-1]_{(\rho)} = \nu^{-1}\rho, \qquad 
	\Delta_{2} = [0]_{(\rho)} = \rho.
	\]
	These segments are naturally ordered by their exponents, with $-1 < 0$. 
	The product
	\[
	\pi(\mathfrak{m}) = Z(\Delta_{1}) \times Z(\Delta_{2}) = \nu^{-1}\rho \times \rho
	\]
	is the Zelevinsky module corresponding to the multiset $\mathfrak{m} = \{\Delta_{1}, \Delta_{2}\}$. 
	Since both segments have odd length, the parameter $\mathfrak{O}(\mathfrak{m})$ is nonzero. 
	According to Theorem~\ref{1}, this means that $\pi(\mathfrak{m})$ does not have a symplectic model.
	On the other hand, the representation $Z([-1,0]_{(\rho)})$ associated with the single segment of even length satisfies $\mathfrak{O}(\mathfrak{m}) = 0$. 
	By Theorem~\ref{1}, this representation has a unique symplectic model.
	
	The representation $\rho \times \nu^{-1}\rho$ reverses the natural order of exponents, placing $0$ before $-1$. 
	Therefore, it is not a Zelevinsky module in the sense of Theorem~\ref{1}. 
	Instead, it is the Langlands standard module associated with the same data, whose irreducible quotient is $Z([-1,0]_{(\rho)})$. 
	Thus, Theorem~\ref{1} does not apply directly to $\rho \times \nu^{-1}\rho$. 
	However, since its quotient $Z([-1,0]_{(\rho)})$ has a symplectic model, the standard module $\rho \times \nu^{-1}\rho$ also inherits a symplectic model through the quotient map.
\end{remark}

	\section{Some useful lemmas}\label{sec2}
Let us recall that $\nu$ denotes the character of $\mathrm{G}_n$ obtained by composing the normalized absolute value on $\mathrm{F}$ with the reduced norm, for any $n \in \mathbb{N}$. 
Let $\mathcal{E}$ be the graph with vertex set $\mathrm{Cusp}$; for every $\rho \in \mathrm{Cusp}$, an edge is drawn between $\rho$ and $\nu^{l_{\rho}}\rho$. 
For each finite subset $\mathfrak{V} \subseteq \mathrm{Cusp}$, denote by 
$\mathcal{E}_{\mathfrak{V}}$ the subgraph induced on the vertex set $\mathfrak{V}$, 
and by $\mathcal{M}_{\mathfrak{V}}$ the set of its connected components. 
Each connected component $\Delta \in \mathcal{M}_{\mathfrak{V}}$ has the form
$\Delta = \{ \nu^{l_{\rho} i} \rho : i = a, \ldots, b \}$
for some $\rho \in \mathrm{Cusp}$ and integers $a \leq b$.
\begin{definition}{\upshape(%
		\textbf{Mitra et. al.}, \cite[Definition $5.7$]{mitra2017klyachko})}
	Two finite subsets $\mathfrak{V}_1, \mathfrak{V}_2 \subseteq \mathrm{Cusp}$ are said to be 
	totally disjoint if either they belong to disjoint supercuspidal $\mathbb{Z}$-lines, 
	or the following holds:  
	for every $\Delta_1 \in \mathcal{M}_{\mathfrak{V}_1}$ and 
	$\Delta_2 \in \mathcal{M}_{\mathfrak{V}_2}$, one of the two inequalities is satisfied:
	$
	\nu^{l_{\rho_1}}\rho_1 < \rho_2 
	\quad \text{for all } \rho_1 \in \Delta_1, \ \rho_2 \in \Delta_2,
	$
	or
	$\nu^{l_{\rho_2}}\rho_2 < \rho_1 
	\quad \text{for all } \rho_1 \in \Delta_1, \ \rho_2 \in \Delta_2$.
\end{definition}
The next lemma is obtained using an approach analogous to that of \cite[Lemma $5.9$]{mitra2017klyachko}.
\begin{lemma}\label{299}
	Let $\pi_1, \ldots, \pi_k$ be representations of $G_{n_i}$ whose supports 
	$\mathrm{Supp}(\pi_i)$ and $\mathrm{Supp}(\pi_j)$ are totally disjoint for all distinct 
	$i, j$. Assume that each supercuspidal representation in $\mathrm{Supp}(\pi_i)$ does not admit a symplectic model for each $i=1,\ldots,k$.  If $\pi = \pi_1 \times \cdots \times \pi_k$
	admits a symplectic model, then so does each $\pi_i$, for every $i = 1, \ldots, k$. 
\end{lemma}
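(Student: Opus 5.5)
By induction on $k$ it suffices to treat $k=2$. Write $\pi=\pi_1\times\pi_2=\mathrm{Ind}_{P}^{G_n}(\pi_1\otimes\pi_2)$, where $P=MU$ is the standard parabolic with Levi $M=G_{n_1}\times G_{n_2}$. The approach is the geometric lemma (Bernstein--Zelevinsky / Mackey theory) for the restriction of $\pi$ to $H_n$, as in \cite[Lemma $5.9$]{mitra2017klyachko} and \cite{OS19}.

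\textbf{Orbit filtration.} The double cosets $P\backslash G_n/H_n$ are finite and are parametrized by involution-type data, as in the analysis underlying \cite{Verma}. Ordering the orbits compatibly with the closure relation gives a filtration of $\pi|_{H_n}$. If $\mathrm{Hom}_{H_n}(\pi,\mathbb{C})\neq0$, then some orbit $PxH_n$ contributes. Concretely, after Frobenius reciprocity and normalization,
\[
\mathrm{Hom}_{M_x\cap x H_n x^{-1}}\big(r_{M_x,M}(\pi_1\otimes\pi_2),\ \delta_x\big)\neq0,
\]
where $M_x$ is a standard Levi of $M$, $r_{M_x,M}$ is the Jacquet module, and $\delta_x$ is a modulus character.

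\textbf{Structure of the stabilizer.} Each $x$ determines a refinement of $n_1$ and $n_2$ into blocks together with an involution $\tau$ pairing blocks. The group $M_x\cap xH_nx^{-1}$ is then a product of two kinds of factors:
\begin{itemize}
\item a symplectic group $H_{m}$ inside a diagonal block fixed by $\tau$;
\item a twisted diagonal copy $\{(g,{}^{\theta}g)\}$ of $G_m$ inside a pair of blocks swapped by $\tau$, where $\theta$ is the involution $g\mapsto J\,{}^t\bar g^{-1}J$.
\end{itemize}
A swapped pair either lies within $\pi_1$ or within $\pi_2$, or it crosses, with one block from $\pi_1$ and the other from $\pi_2$. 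On a crossing pair, distinction forces an irreducible subquotient of the Jacquet module $\sigma_1\otimes\sigma_2$ (with $\sigma_1$ coming from $\pi_1$ and $\sigma_2$ from $\pi_2$) to satisfy $\sigma_2\cong\nu^{s}\,\sigma_1^{\vee\theta}$, essentially $\nu^{s}\sigma_1$ for an explicit half-integral shift $s$ coming from $\delta_x$. This is an equality of cuspidal supports.

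\textbf{Key step: no crossing pairs.} Suppose a crossing pair occurs, and take a block of minimal cuspidal exponent. Its support lies in $\mathrm{Supp}(\pi_1)$, while its shifted partner lies in $\mathrm{Supp}(\pi_2)$, and the two differ by $\nu^{s}$. Total disjointness says the connected components of the two supports are separated by a gap of at least one step $\nu^{l_\rho}$, and this should contradict the size of $s$. The concrete plan is:
\begin{enumerate}
\item write $s$ explicitly from $\delta_x$;
\item take an extremal exponent among the crossing blocks, so that $s$ matches a linked situation;
\item conclude that $\mathrm{Supp}(\pi_1)$ and $\mathrm{Supp}(\pi_2)$ either overlap or are adjacent, contradicting total disjointness.
\end{enumerate}
This is where I expect the main difficulty. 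The modulus character must be tracked carefully, together with the fact that $\theta$ acts on cuspidal supports by $\rho\mapsto\rho^{\vee}$ up to twist. The orbits whose stabilizer contains a closed, rather than open, piece need separate handling, as in \cite{OS19}.

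\textbf{Conclusion.} With no crossings, $x$ lies in $G_{n_1}\times G_{n_2}$ up to the $P$-action, so the contributing factor is $\mathrm{Hom}_{H_{n_1}}(\cdot)\otimes\mathrm{Hom}_{H_{n_2}}(\cdot)$ applied to induced pieces of each $\pi_i$. Distinction of $\pi_1\otimes\pi_2$ along that orbit then gives a nonzero $H_{n_i}$-invariant functional on a subquotient of $\pi_i$ built via the same geometric lemma within $G_{n_i}$. The open orbit in $G_{n_i}$ corresponds to $\pi_i$ itself. To lift distinction from such a subquotient back to $\pi_i$, I will use the hypothesis that no $\rho\in\mathrm{Supp}(\pi_i)$ is $H$-distinguished. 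This hypothesis rules out the orbits where a fixed block carries a single cuspidal, which would require $\rho$ to be distinguished; the remaining contributions come exactly from Hom spaces attached to $\pi_i$, giving $\mathrm{Hom}_{H_{n_i}}(\pi_i,\mathbb{C})\neq0$.
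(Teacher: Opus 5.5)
Your framework is the one the paper uses: filter $\pi|_{H_n}$ by $P$-orbits, apply Offen's description of each orbit's contribution \cite{offen2017parabolic}, then exclude non-open orbits by total disjointness. However, the decisive step is only announced, not carried out. Everything depends on the exact relation between the two members of a swapped pair. You leave the shift $s$ undetermined and hope that an extremal choice of exponent will make it ``match a linked situation''. That cannot work without knowing $s$. If the two halves could differ by $\nu^{2}$, say, total disjointness gives no contradiction, since $\{\rho\}$ and $\{\nu^{2}\rho\}$ (with $l_\rho=1$) are totally disjoint.

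The missing input, which the paper takes from \cite[Lemma 4.1 and Proposition 4.2]{SV2024}, has two parts. Distinction of $\pi$ yields a \emph{good} orbit and an \emph{irreducible} component $\rho=\bigotimes_{\mathfrak i}\rho_{\mathfrak i}$ of $\mathrm{r}_{L,M}(\sigma)$ with $\rho_{\mathfrak i}\simeq\nu\rho_{\tau(\mathfrak i)}$ for every $\mathfrak i\prec\tau(\mathfrak i)$, i.e.\ an exact one-step shift. It also supplies the passage from a nonzero Hom on a reducible Jacquet module to a single irreducible component. With this, no extremal argument is needed. A single swapped pair gives $\rho_2\in\mathrm{Supp}(\pi_{i_2})$ with $\nu\rho_2\in\mathrm{Supp}(\pi_{i_1})$, which total disjointness forbids.

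Your conclusion step rests on a wrong picture of the orbits. There are no swapped pairs inside a single $\pi_i$. Every non-fixed sub-block of one block of $M$ is paired with a sub-block of a \emph{different} block, which is why the paper can assert $i_1\neq i_2$. For two blocks, the radical $V_1\cap V_1^{\perp}$ pairs with $V/(V_1+V_1^{\perp})$, while $V_1/(V_1\cap V_1^{\perp})$ and $(V_1+V_1^{\perp})/V_1$ are the fixed blocks. So ``no crossing pairs'' means $\tau$ is trivial and $L=M$, and the contributing orbit is the open one. There $M_x=H_{n_1}\times\cdots\times H_{n_k}$, and the contribution is $\mathrm{Hom}_{M_x}(\pi_1\otimes\cdots\otimes\pi_k,\mathbb{C})\neq0$. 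Fixing vectors in the other factors then gives $\mathrm{Hom}_{H_{n_i}}(\pi_i,\mathbb{C})\neq0$ directly.

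The lifting of distinction from a Jacquet-module subquotient back to $\pi_i$ is therefore unnecessary, and as you describe it, it is not a valid argument. Distinction of such a subquotient does not imply distinction of $\pi_i$. Ruling out orbits on which a fixed block carries a distinguished cuspidal does not supply that implication either. The hypothesis on $\mathrm{Supp}(\pi_i)$ plays no role of this kind in the paper's proof.
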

\begin{proof}
	Let $\sigma = \pi_1 \otimes \cdots \otimes \pi_k$, and set $n = n_1 + \cdots + n_k$.
	Then $\pi = \mathrm{Ind}_P^{G_n}(\sigma)$ is a representation of $G_n$, where $P=MU$ is the parabolic subgroup of $G_n$ corresponding to the partition $(n_1, \ldots, n_k)$ (with \(\Ind\) indicating normalized parabolic induction). 
	By \cite[Lemma 4.1]{SV2024}, there exists a \(P\)-orbit in \(X\) 
	serving as a good orbit for \(\sigma\) (see \cite[Definition 4.1]{SV2024}). 
	Following the notation of \cite[$\S$3.5]{SV2024}, one can easily see that there exists an irreducible component 
	$\rho$ of $\mathrm{r}_{L,M}(\sigma)$ (where $\mathrm{r}_{L,M}$ denotes the normalized Jacquet module) 
	satisfying the conditions of \cite[Proposition 4.2]{SV2024}.  
	Moreover, we have
	$\rho = \bigotimes_{\mathfrak{i} \in (\mathfrak{J}, \prec)} \rho_i,$
	where
	$\rho_{i,1} \otimes \cdots \otimes \rho_{i,k_i}$ is an irreducible component of 
	$\mathrm{r}_{M_{\beta_i}, G_{n_i}}(\pi_i)$, for each $i = 1, \ldots, k$.  
	In particular, one has
	\[
	\mathrm{Supp}(\rho_{i,j}) \subseteq \mathrm{Supp}(\mathrm{r}_{M_{\beta_i}, G_{n_i}}(\pi_i)) \subseteq \mathrm{Supp}(\pi_i) \quad \text{for all } i \quad (\text{see  \cite[\S 2.12]{BZ})}.
	\]
	
	Suppose there exists $\mathfrak{i} \in \mathfrak{J}$ with $\mathfrak{i} \prec \tau(\mathfrak{i})$, and write $\mathfrak{i} = (i_1,j_1)$ and $\tau(\mathfrak{i}) = (i_2,j_2)$.
	Then we have $\rho_{\mathfrak{i}} \simeq \nu \rho_{\tau(\mathfrak{i})}$, where $i_1 \neq i_2$.
	In particular, there exists $\rho_2 \in \mathrm{Supp}(\rho_{\tau(\mathfrak{i})}) \subseteq \mathrm{Supp}(\pi_{i_2})$ such that $\nu \rho_2 \in \mathrm{Supp}(\rho_{\mathfrak{i}}) \subseteq \mathrm{Supp}(\pi_{i_1})$, which contradicts the total disjointness of $\mathrm{Supp}(\pi_{i_1})$ and $\mathrm{Supp}(\pi_{i_2})$.
	Hence, $\tau$ must be the trivial involution. It then follows from \cite[Proposition 4.2]{SV2024} that each $\pi_i$ admits a symplectic model for $i=1, \ldots, k$, as required.
\end{proof}

The result below follows as a particular case of \cite[Theorem $1.1$]{SV2024}. 
\begin{lemma}\label{11}
	Let $\rho \in \mathrm{Irr}(G_d)$ be a supercuspidal representation with no symplectic model. Then, for a Zelevinsky segment $\Delta = [a,b]_{(\rho)}$, $\mathrm{Z}(\Delta)$ admits a symplectic model if and only if $\mathfrak{O}(\Delta) = 0$.
\end{lemma}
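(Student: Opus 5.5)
Lemma~\ref{11} is a special case of \cite[Theorem 1.1]{SV2024}, which classifies symplectic models for ladder representations of $G_n$. So the plan is to check that $Z(\Delta)$ is a ladder representation and then translate the criterion of that theorem into the condition $\mathfrak{O}(\Delta)=0$.

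First I would note that a single segment $\Delta=[a,b]_{(\rho)}$ is trivially a ladder multisegment: there is only one segment, so the ladder condition on exponents holds vacuously. Hence $Z(\Delta)$ is a ladder representation, and we are in the setting of \cite[Theorem 1.1]{SV2024}.

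Second, I would recall the form of that criterion when the supercuspidal $\rho$ has no symplectic model. In that case the ladder multisegment must decompose into pairs of segments, or single segments, of even length, so that $Z$ is built from $\rho$-lines that pair up. For one segment this should reduce to the statement that $\#(\Delta)=b-a+1$ is even. Heuristically, $Z(\Delta)$ is then a twist of the "Speh-type" character $\nu^{l_\rho\cdot}$ attached to $\rho$, and it is distinguished exactly when its cuspidal support $\nu^{l_\rho a}\rho,\dots,\nu^{l_\rho b}\rho$ can be matched in adjacent pairs $(\nu^{l_\rho i}\rho,\nu^{l_\rho(i+1)}\rho)$. Such a matching exists iff the number of terms is even. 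An odd count would leave an unpaired cuspidal $\nu^{l_\rho i}\rho$, which would itself need a symplectic model; that is impossible, since $\rho$ has none and twisting by a character of the reduced norm does not change this.

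Finally, I would observe that $\mathfrak{O}(\Delta)=0$ iff $\#(\Delta)$ is even, by definition, since $d(\Delta)\ge1$. This gives the equivalence.

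The main obstacle is being careful with the exact hypotheses and statement of \cite[Theorem 1.1]{SV2024}. One needs to check that it is stated for ladders on a single cuspidal line with $\rho$ not $H_d$-distinguished, and that the condition there, which may be phrased through pairing conditions on consecutive segments, specializes for $k=1$ to "$b-a$ odd". If it is phrased in Langlands rather than Zelevinsky notation, I would first pass through the Zelevinsky involution, which for a single segment swaps $Z(\Delta)$ with the Steinberg-type $L(\Delta)$. I would then check that the even-length parity condition is invariant under this translation.
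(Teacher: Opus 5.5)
Your plan is the paper's: Lemma~\ref{11} is stated there simply as a particular case of \cite[Theorem 1.1]{SV2024}, with no further argument, so viewing the single segment as a ladder and specialising the criterion to ``$\#(\Delta)$ even'' is all that is intended. One caution on your fallback: the Zelevinsky involution does not preserve distinction (already for $\mathrm{D}=\mathrm{F}$, the trivial character of $\mathrm{GL}(2,\mathrm{F})$ is $\mathrm{Sp}(2,\mathrm{F})$-distinguished while the Steinberg representation is not), so if the criterion is phrased in Langlands notation you should rewrite $Z(\Delta)$ itself as the ladder of singletons $L([b]_{(\rho)},[b-1]_{(\rho)},\ldots,[a]_{(\rho)})$ and check the pairing condition there, rather than trading $Z(\Delta)$ for $L(\Delta)$.
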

Let us further recall the hereditary property of the symplectic model \cite[Theorem $1.2$]{SV2024}.
\begin{lemma}\label{12}
	Let $\pi_i$ be a representation of $G_{n_i}$ of finite length that admits a symplectic model for each $i = 1,\ldots,k$. Then $\pi_1 \times \cdots \times \pi_k$ admits a symplectic model.  
\end{lemma}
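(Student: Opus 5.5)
The plan is to construct an $H_n$-invariant functional on $\pi=\pi_1\times\cdots\times\pi_k$ by integrating over the open double coset. Take the functionals $\ell_i\in\mathrm{Hom}_{H_{n_i}}(\pi_i,\mathbb{C})$, which are nonzero by assumption, and put $\ell=\ell_1\otimes\cdots\otimes\ell_k$ on $\sigma=\pi_1\otimes\cdots\otimes\pi_k$. Here $P=MU$ is the standard parabolic of type $(n_1,\ldots,n_k)$, $n=\sum n_i$, and $\pi=\Ind_P^{G_n}(\sigma)$.

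\textbf{Geometric input.} Identify $G_n/H_n$ with the variety $X$ of $\bar{\ }$-hermitian-antisymmetric nondegenerate forms, as in \cite[\S3.5]{SV2024}. The first step is to find a $P$-orbit in $X$ whose stabilizer has the right shape. I would choose the orbit of the block-diagonal form $x_0=\diag(J_{n_1},\ldots,J_{n_k})$ after suitable reordering (equivalently, a conjugate $H'=gH_ng^{-1}$). Its stabilizer in $P$ should be $Q=(P\cap H')=(H_{n_1}\times\cdots\times H_{n_k})\ltimes(U\cap H')$. The modulus characters should satisfy $\delta_Q=\delta_P^{1/2}|_Q$, or differ from it by a character trivial on the $H_{n_i}$. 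Checking this modulus compatibility is a direct Lie-algebra dimension count, as in the case $\mathrm{D}=\mathrm{F}$ \cite{OS19}. With it in hand, $\ell$ defines an element of $\mathrm{Hom}_Q(\delta_P^{1/2}\sigma,\delta_Q)$.

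\textbf{Analytic step.} For $\underline{s}=(s_1,\ldots,s_k)\in\mathbb{C}^k$, set $\sigma_{\underline{s}}=\nu^{s_1}\pi_1\otimes\cdots\otimes\nu^{s_k}\pi_k$. Define
\[
\Lambda_{\underline{s}}(f)=\int_{Q\backslash H'} \ell\bigl(f_{\underline{s}}(h)\bigr)\,dh
\]
on the compact-picture realization of $\Ind_P^{G_n}(\sigma_{\underline{s}})$. Since $PH'$ is open, one checks (as in Blanc--Delorme) that this converges for $\mathrm{Re}(\underline{s})$ in a suitable cone. In that range it is $H'$-invariant, and it is nonzero, by testing on functions supported in the open orbit.

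Next, invoke the Blanc--Delorme meromorphic continuation theorem to extend $\Lambda_{\underline{s}}$ to all $\underline{s}$ as a rational family in $q^{\pm s_i}$. Take the leading nonzero Laurent coefficient at $\underline{s}=0$ along a generic line. This gives a nonzero $H'$-invariant functional on $\pi$, and translating by $g$ gives an element of $\mathrm{Hom}_{H_n}(\pi,\mathbb{C})$. Finite length of the $\pi_i$ ensures admissibility, which is needed for rationality.

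\textbf{Main obstacle.} I expect the hardest step to be this analytic one, in particular the convergence and the rationality of $\Lambda_{\underline{s}}$, because $\ell_i$ need not be given by any explicit integral. If the open-orbit construction becomes technical, I would fall back on the geometric lemma and the exactness arguments of \cite[Proposition 4.2]{SV2024}. In that route one argues by induction on $k$, reducing via transitivity of induction to $k=2$. One then filters $\pi|_{H_n}$ by the $P\backslash G_n/H_n$ orbits, and uses the open-orbit piece together with a generic twist by $\nu^{s}$, which kills all Ext obstructions from the closed orbits, to lift the functional. Finally one specializes at $s=0$ by the same leading-term argument.
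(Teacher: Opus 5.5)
Your argument is correct, but it takes a different route from the paper, because the paper gives no proof of this lemma. It imports it verbatim from \cite[Theorem 1.2]{SV2024} and uses it as a black box to obtain Lemma~\ref{13}. What you write is a self-contained version of the standard existence mechanism behind heredity results of this kind (as in Offen--Sayag for $\mathrm{D}=\mathrm{F}$): an intertwining period over the open orbit, Blanc--Delorme rationality, and a leading Laurent coefficient, which is automatically nonzero and invariant. Your setup coincides with the one the paper uses in Section~4 for the upper bound (Lemma~\ref{15}). For $x=\diag(J_{n_1},\ldots,J_{n_k})$ the orbit $P\cdot x$ is open and $P_x=M_x=H_{n_1}\times\cdots\times H_{n_k}$. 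So no reordering is needed, $U\cap H'$ is trivial, and the modulus check is vacuous rather than a dimension count: $\delta_Q$, $\delta_P|_Q$ and the twists $\nu^{s_i}$ all factor through $|\mathrm{Nrd}|$, which is trivial on each $H_{n_i}$. The point worth making explicit is why Blanc--Delorme applies at all. $P$ is not split for the involution $g\mapsto J_n({}^t\bar g)^{-1}J_n$ defining $H_n$, since that involution sends $P$ to the standard parabolic of type $(n_k,\ldots,n_1)$. It is, however, split for $\theta_x(g)=x({}^t\bar g)^{-1}x^{-1}$, whose fixed points are $H'=\eta H_n\eta^{-1}$. Since $\mathrm{Nrd}({}^t\bar g)=\mathrm{Nrd}(g)$, every unramified character of $M$ is $\theta_x$-anti-invariant, so $(s_1,\ldots,s_k)$ is exactly their parameter space. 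The citation buys brevity; your route shows that the only external input is the Blanc--Delorme theorem for the symmetric pair $(G_n,H')$, together with admissibility of finite-length representations. Your fallback is unnecessary. It would also hinge on a generic twist being nontrivial on every non-open stabilizer, i.e.\ on $\tau$ never swapping two blocks inside a single $M$-block.
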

As a direct consequence of Lemmas \ref{11} and \ref{12}, we obtain the following lemma. 
\begin{lemma}\label{13}
	Let $\mathfrak{m} = \{\Delta_1, \ldots, \Delta_t\}$ be a multi-set of Zelevinsky segments (independently of the order chosen), where $\Delta_i = [a_i, b_i]_{(\rho_i)},$
	and each \(\rho_i \in \mathrm{Irr}(G_d)\) is a supercuspidal representation with no symplectic model.  If $\mathfrak{O}(\mathfrak{m}) = 0$, then the representation 
	\[
	Z(\Delta_1) \times \cdots \times Z(\Delta_t)
	\]
	admits a symplectic model.
\end{lemma}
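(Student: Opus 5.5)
The statement to prove is Lemma \ref{13}, and it follows by combining Lemma \ref{11} with Lemma \ref{12}.

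The key observation is that $\mathfrak{O}$ is a sum of nonnegative terms. Indeed, $\mathfrak{O}(\Delta_i)\in\{0,d(\Delta_i)\}$ and $d(\Delta_i)=d\ge 1$. So the hypothesis $\mathfrak{O}(\mathfrak{m})=\sum_{i=1}^t \mathfrak{O}(\Delta_i)=0$ forces $\mathfrak{O}(\Delta_i)=0$ for every $i$. Concretely, this says that each segment $\Delta_i=[a_i,b_i]_{(\rho_i)}$ has even cardinality $b_i-a_i+1$. A segment may also be empty, in which case $Z(\Delta_i)$ is the trivial representation of the trivial group. That factor can simply be dropped from the product, or treated as trivially distinguished.

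Next, fix $i$. Since $\rho_i\in\mathrm{Irr}(G_d)$ is supercuspidal with no symplectic model and $\mathfrak{O}(\Delta_i)=0$, Lemma \ref{11} gives that $Z(\Delta_i)$ admits a symplectic model. In the notation of the introduction, this means $\mathrm{Hom}_{H_{n_i}}(Z(\Delta_i),\mathbb{C})\neq 0$, where $n_i=d\,\#(\Delta_i)$. Each $Z(\Delta_i)$ is irreducible, and hence of finite length.

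Finally, Lemma \ref{12} (the hereditary property) applies directly to $\pi_i=Z(\Delta_i)$, $i=1,\ldots,t$. It shows that $Z(\Delta_1)\times\cdots\times Z(\Delta_t)$ admits a symplectic model.

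The product is taken in whatever order is chosen. Lemma \ref{12} puts no ordering or well-orderedness assumption on the factors, which accounts for the phrase ``independently of the order chosen''.

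There is no genuine obstacle; the argument is bookkeeping. The only points needing care are the nonnegativity argument that splits $\mathfrak{O}(\mathfrak{m})=0$ into the conditions on individual segments, and checking that the hypotheses of Lemmas \ref{11} and \ref{12} are met, namely that the supercuspidal $\rho_i$ are not distinguished and the factors have finite length.
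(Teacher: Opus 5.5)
Your proposal is correct and follows the paper's approach: the paper also deduces this lemma directly from Lemma~\ref{11} and the hereditary property, Lemma~\ref{12}. Your nonnegativity remark, that $\mathfrak{O}(\mathfrak{m})=0$ forces $\mathfrak{O}(\Delta_i)=0$ for each $i$, makes explicit the one step the paper leaves implicit.
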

The statement of Lemma~\ref{11} can be extended to products of \( Z(\Delta) \)'s. However, before proceeding, we must first describe their Jacquet modules explicitly. We begin with the Jacquet module associated with a single  \( Z(\Delta) \). 

Let $\Delta = [a,b]_{(\rho)}$ be a segment in $\mathrm{Cusp}$. Let us recall the description of the Jacquet 
module of $Z(\Delta)$. 
Assume $\rho$ be the representation of $G_d$ and set $n = (b - a + 1)d$, so that 
$Z(\Delta)$ is the representation of $G_n$. Let $M_\alpha$ be the Levi subgroup of $G_n$ corresponding to the partition $\alpha = (n_1, \ldots, n_k)$ of $n$. Then
\[
\mathrm{r}_{M_{\alpha},G_n}(Z(\Delta)) = 0 
\quad \text{unless } d \mid n_i \text{ for all } i = 1, \ldots, k,
\]
in which case we have
\[
\mathrm{r}_{M_{\alpha},G_n}(Z(\Delta)) = Z(\Delta_1) \otimes \cdots \otimes Z(\Delta_k),
\]
where $\Delta_i = [a_i, b_i]_{(\rho)}$ with $a_1 = a$, 
$a_{i+1} = b_i + 1$ for $i = 1, \ldots, k - 1$, and 
$d(b_i - a_i + 1) = n_i$ for each $i = 1, \ldots, k$.

Let $\beta$ be a refinement of a partition $\alpha$ of $n$, and denote 
$M = M_\alpha$ and $L = M_\beta$. 
For the parts of these compositions and for the ordered index set $I$, 
we use the notation introduced in \cite[ $\S$$3.5$]{SV2024}.

Consider segments $\Delta_1, \ldots, \Delta_k$ of supercuspidal representations such that 
\[
\sigma = Z(\Delta_1) \otimes \cdots \otimes Z(\Delta_k)
\]
is an irreducible representation of $M$. 
Then, according to the above description of the Jacquet module of $Z(\Delta)$, whenever the Jacquet module does not vanish, we have
\[
\mathrm{r}_{L,M}(\sigma) = \bigotimes_{\mathfrak{i} \in (\mathfrak{J}, \prec)} Z(\Delta_{\mathfrak{i}}),
\]
where each
\[
\mathrm{r}_{M_{\beta_i}, G_{n_i}}(Z(\Delta_i)) 
= Z(\Delta_{i,1}) \otimes \cdots \otimes Z(\Delta_{i,k_i})
\]
is determined as above.  The assertion of Lemma~\ref{11} extends naturally to tensor products of $Z(\Delta)$'s, whose proof relies on the same idea as in \cite[Proposition $7.5$]{mitra2017klyachko}.


\begin{lemma}\label{16}
	Let	$\mathfrak{m} = \{\Delta_1, \ldots, \Delta_k\} \quad \text{with} \quad 
	\Delta_i = [a_i, b_i]_{(\rho)},
	$
	where \(\rho \in \mathrm{Irr}(G_d)\) is a supercuspidal representation with no symplectic model and all \(a_i, b_i \in \mathbb{Z}\). 
	If $\pi(\mathfrak{m})$ admits a symplectic model, then $\mathfrak{O}(\mathfrak{m}) = 0$.
\end{lemma}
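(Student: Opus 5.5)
We argue by induction on the number $k$ of segments, following the strategy of \cite[Proposition $7.5$]{mitra2017klyachko}. The case $k=1$ is Lemma~\ref{11}. For general $k$, write $\pi(\mathfrak{m}) = \mathrm{Ind}_P^{G_n}(\sigma)$ with $\sigma = Z(\Delta_1)\otimes\cdots\otimes Z(\Delta_k)$, the segments being well-ordered. Suppose $\mathrm{Hom}_{H_n}(\pi(\mathfrak{m}),\mathbb{C})\neq 0$. The geometric lemma machinery of \cite[\S3.5, Lemma 4.1, Proposition 4.2]{SV2024}, used exactly as in the proof of Lemma~\ref{299}, gives a refinement $\beta$ of $\alpha$ and an irreducible component
\[
\mathrm{r}_{L,M}(\sigma)=\bigotimes_{\mathfrak{i}\in(\mathfrak{J},\prec)} Z(\Delta_{\mathfrak{i}}),
\]
where each $\Delta_i$ is cut into consecutive pieces $\Delta_{i,1},\ldots,\Delta_{i,k_i}$ as in the description of the Jacquet module of $Z(\Delta)$ above. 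This component comes with an involution $\tau$ of $\mathfrak{J}$ such that:
\begin{itemize}
\item for $\mathfrak{i}\prec\tau(\mathfrak{i})$ we have $Z(\Delta_{\mathfrak{i}})\simeq \nu Z(\Delta_{\tau(\mathfrak{i})})$ (with the appropriate normalization of $\nu$);
\item for $\tau(\mathfrak{i})=\mathfrak{i}$ the representation $Z(\Delta_{\mathfrak{i}})$ is $H$-distinguished.
\end{itemize}

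We first handle the fixed points. By Lemma~\ref{11}, each fixed $\Delta_{\mathfrak{i}}$ has even cardinality, so contributes $0$ to $\mathfrak{O}$. For the pairs, $Z(\Delta_{\mathfrak{i}})\simeq\nu Z(\Delta_{\tau(\mathfrak{i})})$ forces $\Delta_{\mathfrak{i}}$ and $\Delta_{\tau(\mathfrak{i})}$ to have the same length and to be shifts of each other. Hence the multiset of all pieces splits into two parts: even-length segments, and pairs of equal-length segments. The total number of odd-length pieces is therefore even, and these odd pieces come in matched pairs.

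This parity count is not enough by itself. Each $\Delta_i$ is partitioned into pieces, and $\#\Delta_i$ is odd exactly when an odd number of its pieces are odd. So we must show that the pairing $\tau$ can be taken, or forced, to pair pieces lying inside the same $\Delta_i$, or at least that the odd-length pieces pair up compatibly. This is where the well-ordering and the ordering condition in \cite[Proposition 4.2]{SV2024} enter. Since $\mathfrak{i}\prec\tau(\mathfrak{i})$ and the exponents of $\Delta_{\mathfrak{i}}$ exceed those of $\Delta_{\tau(\mathfrak{i})}$ by one, we use that $\Delta_j$ does not precede $\Delta_i$ for $i<j$, together with the fact that the pieces within each $\Delta_i$ have increasing exponents.

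The main step, and the main obstacle, is to show that a cross pair ($\mathfrak{i}$ in $\Delta_{i_1}$, $\tau(\mathfrak{i})$ in $\Delta_{i_2}$ with $i_1\neq i_2$) can be removed. Take the $\prec$-minimal such pair, or the one involving the first piece $\Delta_{1,1}$ of the largest-exponent segment. We aim to show it forces a configuration in which $\tau$ restricts to an involution on the pieces of $\Delta_1$, or of a union of segments equal up to the relevant shift. We then peel off that sub-multiset: its pieces give an $H$-distinguished Jacquet-module constituent of the corresponding smaller Zelevinsky module, to which the induction hypothesis applies, and the remaining segments form a smaller well-ordered multiset.

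Concretely, we would first try the argument of \cite{mitra2017klyachko}: consider the segment $\Delta_i$ with the largest end $b_i$, and among those the shortest. Its last piece has top exponent $b_i$, so it cannot be $\nu$ times another piece. It must therefore be fixed by $\tau$ or be the lower member of a pair, and chasing the chain downward shows that the pieces of $\Delta_i$ close up among themselves. From this we conclude that $\#\Delta_i$ is even, and finish by induction on $k$.
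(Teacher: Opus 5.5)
You set up the same machinery as the paper: the good orbit from \cite[Lemma 4.1]{SV2024}, the involution $\tau$ on $\mathfrak{J}$, fixed pieces being distinguished, and paired pieces satisfying $Z(\Delta_{\mathfrak{i}})\simeq\nu Z(\Delta_{\tau(\mathfrak{i})})$ for $\mathfrak{i}\prec\tau(\mathfrak{i})$. However, the step you call the main obstacle is only announced, never proved, and the sketch you give for it does not work.

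First, the direction is reversed. A piece whose top exponent is the overall maximum $b_i$ can be $\nu$ times another piece, namely one ending one step lower. What it cannot be is the lower member of a pair. So your dichotomy should read ``fixed or upper member'', and the ``chasing the chain downward'' is not justified.

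Second, in this orbit structure a non-fixed pair always joins pieces of two different segments; this is the $i_1\neq i_2$ used in Lemma~\ref{299}. So ``the pieces of $\Delta_i$ close up among themselves'' can only mean that $\Delta_i$ is not cut at all and $Z(\Delta_i)$ itself is distinguished. That is exactly the claim to be proved.

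Third, the induction does not close as formulated. The conditions of \cite[Proposition 4.2]{SV2024} are only necessary for distinction. A sub-configuration satisfying them therefore does not show that a smaller Zelevinsky module $\pi(\mathfrak{m}')$ is $H$-distinguished, so your induction hypothesis, which is a statement about distinction, cannot be applied to it.

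The missing idea is a specific ordering combined with a minimality argument. This shows directly that $\tau$ is trivial, so no induction is needed.

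Since $\pi(\mathfrak{m})$ does not depend on the well-ordering, arrange the segments so that $a_1\le\cdots\le a_k$; this order is well-ordered. Suppose $\tau$ were nontrivial, and let $\mathfrak{i}$ be $\prec$-minimal with $\tau(\mathfrak{i})\neq\mathfrak{i}$. Then $\mathfrak{i}\prec\tau(\mathfrak{i})$. By the structure of the orbit data \cite[Eq. (3.3)]{SV2024}, $\mathfrak{i}=(i,1)$ is the first piece of $\Delta_i$, so it begins at $a_i$. Its partner $\tau(\mathfrak{i})$ is a piece of a later segment $\Delta_j$ with $j>i$.

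The relation $\Delta_{\mathfrak{i}}=\nu\Delta_{\tau(\mathfrak{i})}$ then forces that piece of $\Delta_j$ to begin strictly below $a_i\le a_j$. But every piece of $\Delta_j$ begins at or above $a_j$, a contradiction. Hence $\tau$ is trivial and only the open orbit can support the functional. Every $Z(\Delta_i)$ is then $H$-distinguished, and Lemma~\ref{11} gives $\mathfrak{O}(\Delta_i)=0$ for each $i$.
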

\begin{proof}
	Let \(\mathfrak{m} = \{\Delta_1, \ldots, \Delta_k\}\) be ordered so that 
	\(a_1 \leq \cdots \leq a_k\), and note that the reverse order 
	\(\{\Delta_k, \ldots, \Delta_1\}\) is standard. 
	Define 
	\(\sigma = Z(\Delta_1) \otimes \cdots \otimes Z(\Delta_k)\) 
	so that
	\[
	Z(\Delta_1) \times \cdots \times Z(\Delta_k) 
	= \mathrm{Ind}_P^{G_n}(\sigma),
	\]
	with \(P\) a standard parabolic of \(G_n\). 
	By \cite[Lemma 4.1]{SV2024}, there exists a \(P\)-orbit in \(X\) 
	serving as a good orbit for \(\sigma\). 
	Suppose, by contradiction, that some \(Z(\Delta_i)\) does not admit a symplectic model. 
	Using the notation of \cite[ $\S$$3.5$]{SV2024},  it is easy to see by \cite[Proposition 4.2]{SV2024} 
	that \(\tau\) is nontrivial. 
	Let \(\mathfrak{i} \in \mathfrak{J}\) be minimal with \(\tau(\mathfrak{i}) \neq \mathfrak{i}\). 
	Then \(\mathfrak{i} \prec \tau(\mathfrak{i})\), and by \cite[Eq. (3.3)]{SV2024}, 
	\(\mathfrak{i} = (i,1)\) for some \(1 \leq i \leq k\). 
	The relation 
	\(\Delta_{\mathfrak{i}} = \nu \Delta_{\tau(\mathfrak{i})}\) 
	from \cite[Proposition 4.2]{SV2024} then contradicts the chosen order on \(\mathfrak{m}\) and structure of $\mathrm{r}_{L,M}(\sigma)$ described prior to the lemma.
	Therefore, each \(Z(\Delta_i)\) admits a symplectic model, 
	and by Lemma~\ref{11}, 
	\(\mathfrak{O}(\Delta_i) = 0\) for all \(i = 1, \ldots, k\). 
	Consequently, we have
	$\mathfrak{O}(\mathfrak{m}) = 0$.
\end{proof}
\section{Proof of Theorem \ref{1}}
In this section, we first present the essential tool and then use it to complete the proof of the main result. 

Let $X = \{g \in G_n : g J_n = J_n~ ^t\bar{g}\}$, where  \( J_n = (J_{ij})_{1 \leq i,j \leq n} \) denotes the matrix in \( G_n \) 
whose entries satisfy \( J_{ij} = 1 \) when \( i + j = n + 1 \) and \( 0 \) otherwise.. Then the map $g \mapsto gJ_n~ ^t\bar{g}$ identifies $G_n/H_n$ with $X$.  For a subgroup $Q$ of $G_n$ and $x \in X$, let $Q_x = \{q \in Q : q x~ ^t \bar{q} = x\}$ be the stabilizer of $x$ in $Q$. In particular, we have $H_n = {(G_n)}_{J_n}$.

Let $n= n_1 + \cdots + n_k$ be a partition of $n$, and let 
$M = \{\diag (g_1, \ldots, g_k) : g_i \in G_{n_i},~ i = 1,\ldots, k\}.$
Let $P = MU$ be the block upper triangular parabolic subgroup of $G_n$ with Levi subgroup $M$ and unipotent radical $U$. For $x = \diag (J_{n_1},\ldots, J_k) \in X$, we have 
$$P_x = \{\diag (h_1, \ldots, h_k) : h_i \in H_{n_i},~ i = 1,\ldots, k\} = M_x$$ 
and the $P$ orbit of $x$ is the open $P$-orbit in $X$. 

Choose an element \(\eta \in G_n\) satisfying $\eta J_n \, {}^t\!\bar{\eta} = x.$
Let \(\sigma\) be a representation of \(M\), and denote by 
\(V\) the space of 
\(\pi = \Ind_P^{G_n}(\sigma)\). 
Define \(V_0\) as the \(H_n\)-invariant subspace of sections supported on 
\(P \eta H_n\). 

We say that the symplectic model of \(\pi\) is supported on the open orbit if 
\[
\mathrm{Hom}_{H_n}(V/V_0, \mathbb{C}) = 0.
\]
In this situation, the restriction of \(H_n\)-equivariant functionals to \(V_0\) is injective:
\[
\mathrm{Hom}_{H_n}(V, \mathbb{C}) \hookrightarrow \mathrm{Hom}_{H_n}(V_0, \mathbb{C}).
\]
Moreover, by \cite[Proposition 4.1]{offen2017parabolic}, one has an isomorphism
\[
\mathrm{Hom}_{H_n}(V_0, \mathbb{C}) \simeq \mathrm{Hom}_{M_x}(\sigma, \mathbb{C}).
\]
Suppose further that \(\sigma = \sigma_1 \otimes \cdots \otimes \sigma_k\), where each \(\sigma_i\) is a representation of \(G_{n_i}\) and 
\(\dim \mathrm{Hom}_{H_{n_i}}(\sigma_i, \mathbb{C}) \leq 1\) for \(i = 1, \dots, k\). 
It then follows immediately that
\[
\mathrm{Hom}_{M_x}(\sigma, \mathbb{C}) \simeq \bigotimes_{i=1}^k \mathrm{Hom}_{H_{n_i}}(\sigma_i, \mathbb{C}).
\]
Taken together, these observations yield a key structural result that will be applied twice in what follows.
\begin{lemma}\label{15}
	Let \( n = n_1 + \cdots + n_k \) and let \(\sigma_i\) be a representation of \( G_{n_i} \) such that 
	\(\dim \mathrm{Hom}_{H_{n_i}}(\sigma_i, \mathbb{C}) \leq 1\) for \( i = 1, \ldots, k \). 
	Suppose further that the symplectic model on \( \sigma_1 \times \cdots \times \sigma_k \) is supported on the open orbit. 
	Then there exists a natural embedding
	\[
	\mathrm{Hom}_{H_n}(\sigma_1 \times \cdots \times \sigma_k, \mathbb{C})
	\hookrightarrow 
	\bigotimes_{i=1}^k \mathrm{Hom}_{H_{n_i}}(\sigma_i, \mathbb{C}).
	\]
	In particular,
	\(\dim \mathrm{Hom}_{H_n}(\sigma_1 \times \cdots \times \sigma_k, \mathbb{C}) \leq 1\).
\end{lemma}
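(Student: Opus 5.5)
The plan is to assemble the observations made just before the statement into a chain of maps, and then compute dimensions. Write $\pi=\sigma_1\times\cdots\times\sigma_k=\Ind_P^{G_n}(\sigma)$ with $\sigma=\sigma_1\otimes\cdots\otimes\sigma_k$ and $P=MU$ the standard parabolic attached to $(n_1,\ldots,n_k)$. Let $V$ be the space of $\pi$, and let $V_0\subseteq V$ be the $H_n$-invariant subspace of sections supported on the open double coset $P\eta H_n$. Here $\eta J_n\,{}^t\bar\eta=x=\diag(J_{n_1},\ldots,J_{n_k})$.

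The first step uses the exact sequence $0\to V_0\to V\to V/V_0\to 0$. Applying the left exact contravariant functor $\mathrm{Hom}_{H_n}(-,\mathbb{C})$ gives
\[
0\to \mathrm{Hom}_{H_n}(V/V_0,\mathbb{C})\to \mathrm{Hom}_{H_n}(V,\mathbb{C})\to \mathrm{Hom}_{H_n}(V_0,\mathbb{C}).
\]
The hypothesis that the symplectic model is supported on the open orbit says exactly that the first term vanishes. Hence restriction to $V_0$ is injective.

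The second step identifies the last term. By \cite[Proposition 4.1]{offen2017parabolic}, there is an isomorphism
\[
\mathrm{Hom}_{H_n}(V_0,\mathbb{C})\simeq\mathrm{Hom}_{P_x}(\sigma,\mathbb{C}).
\]
I will check that no modular-character twist appears, or that it is trivial. Here $P_x=M_x=H_{n_1}\times\cdots\times H_{n_k}$, which acts on $\sigma$ factorwise.

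The third step is the tensor identification
\[
\mathrm{Hom}_{M_x}(\sigma,\mathbb{C})\simeq\bigotimes_i\mathrm{Hom}_{H_{n_i}}(\sigma_i,\mathbb{C}).
\]
The natural map sends $\ell_1\otimes\cdots\otimes\ell_k$ to $\ell_1\otimes\cdots\otimes\ell_k$ on $\sigma$, and it is injective. For surjectivity I argue by induction on $k$. Fix $\ell\in\mathrm{Hom}_{M_x}(\sigma,\mathbb{C})$ and vectors $v_2,\ldots,v_k$. Then $v_1\mapsto\ell(v_1\otimes v_2\otimes\cdots\otimes v_k)$ lies in $\mathrm{Hom}_{H_{n_1}}(\sigma_1,\mathbb{C})$.
\begin{itemize}
\item If this space is $0$, then $\ell=0$.
\item Otherwise it is spanned by a single $\ell_1$, since its dimension is at most one. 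Then $\ell(v_1\otimes w)=\ell_1(v_1)\,\lambda(w)$ for a linear form $\lambda$ on $\sigma_2\otimes\cdots\otimes\sigma_k$. This $\lambda$ is $H_{n_2}\times\cdots\times H_{n_k}$-invariant (pick $v_1$ with $\ell_1(v_1)\neq0$), and induction applies.
\end{itemize}
This is where the hypothesis $\dim\le1$ is used: without it, the algebraic tensor product need not exhaust all invariant functionals on infinite-dimensional spaces.

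Composing the three steps gives the natural embedding. Since the tensor product of spaces of dimension at most one has dimension at most one, the final bound follows. The main point needing care is the second step, namely confirming that the cited proposition gives an isomorphism with no twisting character for this open orbit. Beyond that, I expect no real obstacle; the injectivity in the first step is purely formal.
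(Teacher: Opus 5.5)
Your proposal is correct and follows the paper's own argument: the open-orbit hypothesis makes restriction to $V_0$ injective, \cite[Proposition 4.1]{offen2017parabolic} identifies $\mathrm{Hom}_{H_n}(V_0,\mathbb{C})$ with $\mathrm{Hom}_{M_x}(\sigma,\mathbb{C})$, and the hypothesis $\dim\le 1$ gives the tensor factorization. The paper calls that last step immediate, whereas you prove it by induction. The twisting character you flag is harmless: $P_x=M_x=H_{n_1}\times\cdots\times H_{n_k}$ is unimodular, and $\delta_P$ is a power of $\nu$ on each block, so it is trivial on each $H_{n_i}$ (there $\mathrm{Nrd}(h)^2=1$).
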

We now prove our main result, whose proof follows the argument of  \cite[Lemma $3.9$]{OS19}.
\begin{proof}[Proof of Theorem $\ref{1}$.]
	It is easy to see by Lemma \ref{13} that  the ``if direction" holds. Therefore, if 
	\(\mathfrak{O}(\mathfrak{m}) = 0\), then we also have 
	\[
	\dim \mathrm{Hom}_{H_n}\bigl(\pi(\mathfrak{m}), \mathbb{C}\bigr) \geq 1.
	\]
	The proof of the ``only if direction" and the other inequality is established in two steps. To begin, consider the case where $\mathfrak{m}$ is supported entirely on a single supercuspidal $\mathbb{Z}$-line. Explicitly,
	\[
	\mathfrak{m} = \{\Delta_1, \ldots, \Delta_k\} \quad \text{with} \quad 
	\Delta_i = [a_i, b_i]_{(\rho)},
	\]
	where \(\rho \in \mathrm{Irr}(G_d)\) is supercuspidal and all \(a_i, b_i \in \mathbb{Z}\). The ``only if'' direction in this case is precisely Lemma~\ref{16}. Moreover, the proof of this lemma yields additional information.
	Suppose in addition that \(\mathfrak{m}\) is arranged so that 
	\(a_1 \leq \cdots \leq a_k\).
	In this case,
	\[
	\pi(\mathfrak{m}) = Z(\Delta_1) \times \cdots \times Z(\Delta_k),
	\]
	and Lemma~\ref{16} ensures that its symplectic model is supported on the open orbit.
	By the uniqueness of the symplectic model for irreducible representations (see Lemma \ref{10}), 
	one can then invoke Lemma~\ref{15} with \(\sigma_i = Z(\Delta_i)\). It follows that 
	$\dim \mathrm{Hom}_{H_n}(\pi(\mathfrak{m}), \mathbb{C}) \leq 1$.
	
	Consider now a general multiset \(\mathfrak{m}\), which can be written as
	$\mathfrak{m} = \mathfrak{m}_1 + \cdots + \mathfrak{m}_k$ (viewing multisets as functions with finite support),
	where each \(\mathfrak{m}_i\) is supported on a distinct supercuspidal \(\mathbb{Z}\)-line associated with $\rho_i$. Here, $\rho_i$ does not admit a symplectic model for all $i = 1, \ldots, k$.  
	Concretely, for \(1 \leq i \neq j \leq k\), \(\Delta \in \mathfrak{m}_i\), \(\Delta' \in \mathfrak{m}_j\), 
	\(\rho \in \Delta\), and \(\rho' \in \Delta'\), one has $\rho' \not\simeq \nu^m \rho \quad \text{for all } m \in \mathbb{Z}.$
	It then follows that $$\pi(\mathfrak{m}) = \pi(\mathfrak{m}_1) \times \cdots \times \pi(\mathfrak{m}_k).$$
	Lemma~\ref{299} implies that the symplectic model on
	$\pi(\mathfrak{m}_1) \times \cdots \times \pi(\mathfrak{m}_k)$
	is supported on the open orbit. By invoking Lemma~\ref{15} again with
	\(\sigma_i = \pi(\mathfrak{m}_i)\),
	and using the relation
	\[
	\mathfrak{O}(\mathfrak{m}) = \sum_{i=1}^k \mathfrak{O}(\mathfrak{m}_i),
	\]
	the ``only if direction" follows. In particular, when \(\mathfrak{O}(\mathfrak{m}) = 0\),
	$\dim \mathrm{Hom}_{H_n}\bigl(\pi(\mathfrak{m}), \mathbb{C}\bigr) \leq 1$.
	Let \( n \) be fixed such that \( \pi \in \mathrm{Irr}(G_n) \). By composing with the canonical projection 
	\( \pi(\mathfrak{m}) \rightarrow \pi \), we obtain a natural injective linear map
	\[
	\mathrm{Hom}_{H_n}(\pi, \mathbb{C}) \hookrightarrow \mathrm{Hom}_{H_n}(\pi(\mathfrak{m}), \mathbb{C}) \leq 1.
	\]
	Since $\pi$ admits a symplectic model, the existence of this injection forces the vanishing of the obstruction 
	\( \mathfrak{O}(\mathfrak{m}) \), that is,
	$\mathfrak{O}(\mathfrak{m}) = 0$.
	Consequently, this establishes the desired result and completes the proof of the theorem.
\end{proof}

	\begin{center}
		\large {\textbf{Acknowledgment}}
	\end{center}
	The author  thanks  Dipendra Prasad  for his helpful comments and valuable suggestions. He would like to thank Ravi Raghunathan  for many interesting conversations during the course of the preparation of this article. The author  thanks the Indian Institute of Technology Bombay for supporting this research through the Institute Postdoctoral Fellowship.
	\bibliography{bibliography}
	\bibliographystyle{alpha}
\end{document}